\documentclass[a4paper,12pt]{amsart}

\usepackage{amsmath}
\usepackage{amssymb}
\usepackage{amsthm}
\usepackage{mathrsfs}
\usepackage{graphicx}
\usepackage{enumerate}

\newtheorem{theorem}{Theorem}[section]
\newtheorem{lemma}[theorem]{Lemma}
\newtheorem{proposition}[theorem]{Proposition}
\newtheorem{corollary}[theorem]{Corollary}
\newtheorem{definition}[theorem]{Definition}
\newcommand{\proba}{\mathbb{P}}

\font\labbfont=bbold10 scaled \magstep2
\newcommand{\gun}{\hbox{\labbfont \char49}}
\newcommand{\ambigus}{\mathscr{Q}}

\newcommand{\ftm}{{full topological Markov expanding map of the interval }}
\newcommand{\code}{W}

\newcommand{\N}{\ensuremath{\mathbb{N}}}    
\newcommand{\R}{\ensuremath{\mathbb{R}}}     
\newcommand{\Z}{\ensuremath{\mathbb{Z}}}    
\renewcommand{\P}{\ensuremath{\mathbb{P}}}    
   
\newcommand{\shift}{\ensuremath{\mathscr{S}}}

\newcommand{\cardin}{\mathcal{K}}
\newcommand{\parti}{\mathcal{P}}
\numberwithin{equation}{section}




\begin{document}
\keywords {topological Markov maps of the interval,
chains of infinite order, Gibbs formalism.}
\title{Chains of infinite order, chains with memory of variable length, 
 and maps of the interval.}
\author{Pierre Collet }
\address{
  Centre de Physique Th\'eorique, CNRS UMR 7644, Ecole Polytechnique,
  91128 Palaiseau Cedex, France} 
 \email{collet@cpht.polytechnique.fr}

\author{Antonio Galves} 
\address{
  Instituto de Matem\'atica e Estat\'{\i}stica, Universidade de S\~ao
  Paulo, BP 66281, 05315-970 S\~ao Paulo, Brasil} 
  \email{galves@usp.br}

\date{July 29, 2012}

\maketitle

\begin{abstract}
We show how to construct a topological Markov map of
the interval whose invariant probability measure is the stationary law
of a given stochastic chain of infinite order. In particular we
caracterize the maps corresponding to stochastic chains with memory of
variable length. The problem treated here is the converse of the
classical construction of the Gibbs formalism for Markov expanding
maps of the interval.  
\end{abstract}

\section{Introduction.}\label{introduction}

The founding papers by Bowen (2008)\nocite{bowen}, Ruelle (1978)\nocite{ruelle} and Sina{\u\i} (1972)\nocite{sinai} explained how to use the Gibbs formalism for Markov expanding maps of the interval. In this formalism to each such map of the interval is associated a Gibbs measure which corresponds through the dynamical coding to an absolutely continuous invariant measure.  Recalling that Gibbs measures with H\"older continuous interactions are stochastic chains of infinite order (cf. Fern\'andez and Mailard 2004 \nocite {fernandez_maillard_2004} and references therein), this means that expanding maps of the interval are naturally associated to stochastic chains.  In particular, piecewise affine topological Markov maps correspond to Markov chains on a finite alphabet.

In this paper we address the converse problem, namely, given a
stochastic chain of infinite order, taking values on a finite
alphabet, can we construct a topological Markov map of the interval
whose invariant measure is the invariant probability measure of the
chain?  

A particular case of this question has to do with the class of
stochastic chains with memory of variable length, introduced by
Rissanen (1983)\nocite{rissanen}.  Recently C\'enac et
al. \cite{cenac} have shown how to represent two interesting examples
of stochastic chains with memory of variable (unbounded) length by
maps of the interval.  Inspired by this paper, we discuss at a more
general level some of the relations between chains of infinite order,
chains with memory of variable length models and expanding Markov maps
of the interval.

This paper is organized as follows. In Section \ref{recall} we briefly present the notions of expanding maps of the interval and stochastic chains of infinite order and for the convenience of the reader we recall some classical results. In Section \ref{map_to_chain} we recall the classical construction of a stochastic chain of infinite order given an expanding map of the interval. For more details about this construction we refer the reader to the articles of \cite{walters1978a}, \cite{walters1978b} and \cite{hofkel} and references therein. In Section \ref{chain_to_map} we explain how to construct an expanding map of the interval given a stochastic chain of infinite order. Finally in Section \ref{vl} we study the particular case of stochastic chains with memory of variable (unbounded) length.

\section{Notation, chains and maps.}\label{recall}

In order to make this paper self contained as much as possible, we
gather in this section some basic definitions and results about
stochastic chains and maps of the interval. 

Let $A$ denote a finite alphabet $A=\{1,\ldots, \mathcal{K}\}$.  Given two
integers $m\leq n$ we denote by $w_m^n$ the sequence $(w_m, \ldots,
w_n)$ of symbols in $A$, and $A_m^n$ denotes the set of such
sequences. Any sequence $w_m^n$ with $m > n$ represents the empty
string. The same notation is extended to the cases $m=\pm \, \infty$.

Given two finite sequences $w$ and $v$ we will denote by $vw$ the
sequence obtained by concatenating the two strings.  For example,
$z_{-\infty}^{-1}a$ denotes the sequence having the symbol $a$ at the
zero position and the symbols $z_i$ at the positions $i\le -1$.

For a finite string $a_{m}^{n} \in A_m^n$, we denote by
$C(a_{m}^{n})$ the cylinder given by
$$
C(a_{m}^{n})=\big\{x_{-\infty}^{+\infty}\in A_{-\infty}^{+\infty}\;:\; x_{m}^{n}=a_{m}^{n}\big\}\;.
$$

\subsection{Stochastic chains of infinite order.}
A family $p$ of  numbers $p(a|x_{-\infty}^{-1})\in[0,1]$, with $a\in A$ and
$x_{-\infty}^{-1}\in A_{-\infty}^{-1}$, is called a family of
transition probabilities if it satisfies the two conditions
\begin{itemize} 
\item For each fixed sequence $x_{-\infty}^{-1}$
$$
\sum_{a}p(a|x_{-\infty}^{-1})=1\;.
$$
\item For each symbol $a\in A$, the map 
$$
x_{-\infty}^{-1}\longrightarrow p(a|x_{-\infty}^{-1})
$$
is measurable with the product sigma-algebra on  $A_{-\infty}^{-1}$.
\end{itemize}

\begin{definition}\label{nonnull}
A family $p$ of
transition probabilities satisfies the condition of non-nullness if
 \[
 \inf\{ p(a|x_{-\infty}^{-1})\;\colon\; a\in A,
   x_{-\infty}^{-1}\in A_{-\infty}^{-1}\}  > 0\;.
\]
 \end{definition}

\begin{definition}\label{betak}
The continuity rate of a  family $p$ of
transition probabilities is the sequence $(\beta_{k})_{k\ge1}$ defined by
\[
      \beta_k \, = \,\sup \Bigl\{ \,| 
  p(a | x_{-\infty}^{-1})-p(a | y_{-\infty}^{-1})
|\;\colon\; a\in A, x_{-\infty}^{-1}, y_{-\infty}^{-1} 
\in A_{-\infty}^{-1}\mbox{ with } \, x_{-k}^{-1}=y_{-k}^{-1}
      x_{-\infty}^{-1} \overset{k}{=} y_{-\infty}^{-1}\,\Bigr\}.
\]
\end{definition}

\begin{definition}\label{continuity}
The family $p$ of
transition probabilities with continuity rate $(\beta_{k})$ is said to
be continuous if
\[
\lim_{k\rightarrow +\infty}\beta_k=0\, .
\]
\end{definition}

\begin{definition}
 We will say that a probability measure $\proba$ on $A^{\Z}$ is
 translation invariant (or stationary) if for any $m\ge 0$ and for any
 $a_{0}^{m}\in A_{0}^{m}$, we have
 \[
\proba\big\{X_n^{n+m}=a_{0}^{m}\big\}=\proba\big\{X_0^{m}=a_{0}^{m}\big\}
\]
for any $n \in \Z$.
\end{definition}
The notion of translation invariance says that the probability measure
$\P$ is invariant with respect to the shift $\shift: A^{\Z}
\rightarrow A^{\Z}$, defined as follows. For every sequence
$\underline{z}=z_{-\infty}^{+\infty}$, we have
\[
\shift(\underline{z})_i=\underline{z}_{i-1}\, .
\]

v\begin{definition}
 We will say that a probability measure $\proba$ on $A^{\Z}$ is invariant
 with respect to  $p$, if for any continuous function
 $f:A_{-\infty}^0\rightarrow \R$ we have
\begin{equation}\label{invariant}
\int f(z_{-\infty}^0) d\P(z_{-\infty}^0)= 
\int
\sum_{a \in A} p(a\,|\,(z_{-\infty}^{-1}) f(z_{-\infty}^{-1}a) d\P(z_{-\infty}^{-1})
\end{equation}
\end{definition}

From stationarity and invariance of $\P$ with respect to $p$ it
follows immediately that for any pair $m\le n$ of integers and for any
$a_{m}^{n}\in A_{m}^{n}$, we have
$$
\proba\big(C(a_{m}^{n})\big)=\int_{A_{-\infty}^{m-1}}
\prod_{j=m}^{n}\;p\big(a_{j}\big|a_{m}^{j-1}x_{-\infty}^{m-1}\big)\;
d\proba\big(x_{-\infty}^{m-1}\big) \;.
$$

For later references, it is convenient to collect in the following
theorem some well know results about families of transition
probabilities. 
\begin{theorem}\label{existinv}
  If the family of transition probabilities satisfies the non-nullness condition \ref{nonnull} and the sequence of continuity rates is summable, then there exists a unique ergodic stationary probability measure $\proba$ on $A^{\Z}$,
 invariant with respect
to  $p$. This invariant probability measure 
has no atom, and for any finite sequence $a_{m}^{n}$,
$\proba(C(a_{m}^{n}))>0$. 
\end{theorem}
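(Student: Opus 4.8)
The plan is to reduce everything to a contraction argument on an appropriate space of measures, which is the standard route for chains of infinite order (Doeblin--Fortet, Harris, or the Fernández--Maillard approach). First I would introduce the one-sided shift space $A^{-\infty}_{-1}$ (or $A^{\N}$) and define, for each $n\ge 1$, the $n$-step ``kernel'' obtained by iterating $p$: given a past $x_{-\infty}^{-1}$, the transition probabilities $p(\cdot\mid x_{-\infty}^{-1})$ induce a probability on $A^{0}$, then on $A_{0}^{1}$, etc. Concretely I would work with the operator $L$ acting on continuous functions $f : A_{-\infty}^{0}\to\R$ by $(Lf)(z_{-\infty}^{-1}) = \sum_{a\in A} p(a\mid z_{-\infty}^{-1}) f(z_{-\infty}^{-1}a)$, whose dual acts on measures; a fixed point of the dual is exactly an invariant measure in the sense of \eqref{invariant}. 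Stationarity then follows automatically, since invariance with respect to $p$ together with the consistency of the finite-dimensional marginals forces the displayed product formula for $\proba(C(a_m^n))$, which is manifestly translation invariant.

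The core step is the contraction estimate. Using non-nullness, write $\varepsilon = \inf p(a\mid x_{-\infty}^{-1}) > 0$; then for any two pasts $x,y$ agreeing on the last $k$ coordinates, the measures on $A^{0}$ they induce are within total variation roughly $\cardin\,\beta_k$ of each other, and more importantly a Doeblin-type minorization holds: one can couple the two chains so that with probability at least $\varepsilon$ they agree at the next step regardless of the past. Iterating, the $n$-step kernels starting from any two pasts get exponentially (or at least summably, using $\sum_k\beta_k<\infty$) close in total variation on any fixed cylinder. This gives a Cauchy sequence of candidate marginals; passing to the projective limit via Kolmogorov's extension theorem produces a probability measure $\proba$ on $A^{\Z}$ satisfying the product formula, hence stationary and $p$-invariant. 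Uniqueness and ergodicity follow because the same coupling shows any two stationary $p$-invariant measures have the same values on all cylinders, and that the tail sigma-algebra is trivial. The non-atomic property follows since $\P(C(a_m^n)) \le (1-\varepsilon + \text{small})^{n-m}\to 0$ as the cylinder length grows — more precisely $\P(C(a_m^n))\le \prod (1 - \varepsilon(\cardin-1))$-type bound is false as stated, so instead one notes each single symbol has conditional probability at most $1-(\cardin-1)\varepsilon < 1$, so any point, being an intersection of cylinders of growing length, has measure $0$. Strict positivity $\P(C(a_m^n))>0$ is immediate from non-nullness and the product formula, since every factor is at least $\varepsilon$.

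The main obstacle I expect is making the coupling/contraction quantitative enough to get a genuine Cauchy sequence rather than just tightness: one must control the dependence of the $n$-step marginal on the conditioning past uniformly, and the summability hypothesis $\sum_k\beta_k<\infty$ (rather than mere continuity) is exactly what is needed to beat the accumulation of errors across the infinitely many coordinates of the past. A clean way to organize this is to fix a cylinder $C(a_m^n)$ and show that $\big|\int \prod_{j=m}^{n} p(a_j\mid a_m^{j-1}x) \,d\mu(x) - \int \prod_{j=m}^{n} p(a_j\mid a_m^{j-1}y)\,d\nu(x)\big|$ is small whenever $\mu,\nu$ are built from pasts that are forced to agree on more and more coordinates — the agreement being \emph{created} by the conditioning on $a_m^{j-1}$ as $j$ grows, plus the Doeblin minorization for the part of the past before position $m$. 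Once this uniform estimate is in hand, existence, uniqueness, ergodicity, non-atomicity and positivity all drop out, and I would simply cite the relevant standard references (e.g.\ the work quoted in the introduction) for the detailed verification rather than reproduce it in full.
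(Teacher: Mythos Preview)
Your plan is sound and, in fact, goes further than the paper itself does: the paper offers no proof of this theorem at all, treating it as a known result and simply citing Onicescu--Mihoc (1935), Doeblin--Fortet (1937), Harris (1955), and Comets \emph{et al.}\ (2002). The Doeblin--Fortet style contraction/coupling argument you sketch is precisely the approach in those references, and your final remark --- that you would cite the standard references rather than reproduce the estimates --- is exactly what the paper does. The auxiliary claims (no atoms from the upper bound $p(a\mid\cdot)\le 1-(\cardin-1)\varepsilon<1$, positivity of cylinders from the lower bound $\varepsilon$) are correctly identified and straightforward.
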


This type of result has been proved by many authors starting with Onicescu and Mihoc (1935)\nocite{onimih35}, 
Doeblin and Fortet (1937)\nocite{doefor37} and Harris (1955)\nocite{har55} and 
Comets {\sl et al.} (2002) \nocite{comets2002}.

Let us now consider the probability space having $A^{\Z}$ as sample
space, equipped with its product $\sigma$-algebra, and having
$\proba$, whose existence is granted by Theorem \ref{existinv}, as
probability measure. We can define a stochastic chain $(X_n)_{n\in
  \Z}$ on this probability space, by taking, for each $n \in \Z$
$$
X_n: A^{\Z} \longrightarrow A
$$ as the projection on the $n^{th}$ coordinate.  In other words, for
any $m \le n$ and any choice of the sequence $a_m^n$, we have
 $$
\proba\big(C(a_{m}^{n})\big)=\P\big\{X_{m}^{n}=a_{m}^{n}\big\}\;.
$$ The stochastic chain $(X_n)_{n\in\Z}$ is said to be associated to
the family of transition probabilities $p$.
 \bigskip

\subsection{Piecewise expanding maps of the interval.}

From now on let $\Omega=[0,1]$. We first recall the definition of a
piecewise expanding map of the interval $\Omega$.  Let
$0=\eta_{0}<\eta_{1}<\ldots<\eta_{\cardin}=1$ be a finite sequence and
for each interval $I_j=]\eta_{j-1},\eta_{ j}[$, with ($1\le j\le
    \cardin$), let $T_{j}$ be a monotone map from $I_j$ to $\Omega$
    which extends to a $C^{2}$ map on
    $\bar{I_j}=[\eta_{j-1},\eta_{j}]$. The map $T$ is defined as
    follows. For each $\omega \in \Omega\backslash \{\eta_0,
    \eta_1,\ldots, \eta_{\cardin}\}$
$$
T(\omega)=T_{j}(\omega)\, , \quad\mathrm{if}\quad \omega\in I_j\;.
$$ We denote by $\parti$, the collection of open intervals $I_j$, with
$j=1,\ldots,\cardin$, and observe that it defines a partition of the
$\Omega\backslash\mathscr{N}_0$, where $\mathscr{N}_0=\{\eta_0,
\eta_1,\ldots, \eta_{\cardin}\}$. From now on let us call
$A=\{1,\ldots,\cardin\}$ the set of indexes of the partition $\parti$.

\begin{definition}
The map $T$ of the interval has the (uniform) 
\index{expanding property}expanding property
if there is an integer $m>0$ and a constant $c>1$ such that
at any point where  $T^{m}$ is differentiable we have
$$
\big|{T^{m}}'\big|\ge c\;.
$$ 
\end{definition}

\begin{definition}
The piecewise expanding map $T$ of the interval is said to be
topological Markov if for any $i=1,\ldots,\cardin$, the closure of
$T(I_{i})$ is a union of closures of intervals $I_{j}$,
$j\in\{1,\ldots,\cardin\}$. The map $T$ is called full topologicall
Markov if for any $T_i(I_i)=\Omega$ for any $i=1,\ldots,\cardin$.
\end{definition}
Note that the topological Markov property notion is not to be mistaken
with the Markov property of stochastic processes.

Recall that the map $T$ is not defined on the finite set
$\mathscr{N}_0$. Call $\mathscr{N}$ the set of pre-images of
$\mathscr{N}_0$, namely 

\[
\mathscr{N}=\mathscr{N}_0\cup \bigcup_{k\ge 1} \left\{\omega \in
\Omega\, \big| \, \, 
T^k(\omega) \in \mathscr{N}_0\right\}
\]

Given an \ftm  $T$,  we define a
coding of $\Omega\backslash \mathscr{N}$ with alphabet
$A=\{1,\ldots,\cardin\}$.  
This coding is  a map  $\code$ from $\Omega\backslash \mathscr{N}$ to
$A^{\N}$  
$$
\omega\longrightarrow \code(\omega)=\big( \code_{n}(\omega)\big)_{n \in \N}
$$ 
given  by
$$
\code_{n}(\omega)=j\, , \quad\mathrm{if}\quad T^{n}(\omega)\in I_j\;.
$$

Given a \ftm $T$, we have just associated a code to a point in $\Omega\backslash
\mathscr{N}$.  We can also go in the opposite direction and this is
the content of the next proposition.
To simplify the presentation we will restrict
ourselves to the case of full Markov maps. The extension to the case
of general Markov maps is straightforward.  

\begin{proposition}
  Assume $T$ is a \ftm and $A$ is the set of indexes of the partition
  $\parti$. Then given a code $x_{0}^{+\infty} \in A_{0}^{+\infty}$,
  there exists at most one point in the interval $\Omega$ which is
  coded by this sequence.
\end{proposition}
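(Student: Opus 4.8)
The plan is to exploit the uniform expanding property together with the topological Markov structure to show that the coding map $\code$ is injective on $\Omega\backslash\mathscr{N}$; equivalently, that a code determines at most one point. First I would observe that the code $x_0^{+\infty}$ of a point $\omega$ records, for every $n\ge 0$, which partition interval $T^n(\omega)$ falls into, and hence that $\omega$ must lie in the nested intersection
\[
\bigcap_{n\ge 0} \; \bigl(T^n\bigr)^{-1}\!\bigl(I_{x_n}\bigr)
\;=\;\bigcap_{n\ge 0} \; I_{x_0}\cap T^{-1}(I_{x_1})\cap\cdots\cap T^{-n}(I_{x_n}).
\]
Since $T$ is full topological Markov, each of these finite intersections is a nonempty open subinterval $J_n$ of $\Omega$, and the $J_n$ are decreasing: $J_0\supseteq J_1\supseteq\cdots$. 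The claim is then that $\bigcap_n \overline{J_n}$ is a single point, which forces uniqueness.

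The key step is the length estimate. On $J_n$, the composition $T^n$ is a diffeomorphism onto $\Omega=[0,1]$ (this is where fullness is used: $T$ maps each $I_j$ onto all of $\Omega$, so the forward images never get ``stuck'' and the $n$-fold preimage of the relevant cylinder is genuinely sent onto the whole interval). Hence $|J_n|$ can be controlled by the inverse of the minimal slope of $T^n$ on $J_n$. Using the uniform expanding property — there are $m>0$ and $c>1$ with $|(T^m)'|\ge c$ wherever defined — one gets $|(T^{km})'|\ge c^{k}$ on each branch, and combining with a uniform lower bound on $|T'|$ over the finitely many branches (available since each $T_j$ extends to a $C^2$ map on the closed interval $\bar I_j$, so $|T_j'|$ is bounded below away from the critical-value issues, or at worst one absorbs finitely many extra factors), one concludes $|J_n|\le C\lambda^{n}$ for constants $C>0$ and $\lambda\in(0,1)$. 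Therefore $\operatorname{diam}(J_n)\to 0$.

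Finally, the nested intervals $\overline{J_0}\supseteq\overline{J_1}\supseteq\cdots$ are compact with diameters tending to $0$, so by Cantor's intersection theorem $\bigcap_n \overline{J_n}$ consists of exactly one point $\omega_\ast$. Any point of $\Omega\backslash\mathscr{N}$ coded by $x_0^{+\infty}$ lies in every $J_n$, hence equals $\omega_\ast$; this proves there is at most one such point. I expect the main obstacle to be the length estimate: one must be careful that the ``expanding'' hypothesis is only uniform for the iterate $T^m$ and only at points of differentiability, so the bound on $|T'|$ for the intermediate iterates $T,T^2,\ldots,T^{m-1}$ must be handled separately — but since there are only finitely many branches and each $T_j\in C^2(\bar I_j)$ is monotone, $\inf |T_j'|$ over the branches is a positive constant (monotonicity plus $T_j(I_j)=\Omega$ prevents the derivative from vanishing on a set that matters for the length bound), so these finitely many factors only change the constant $C$, not the geometric decay. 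One should also note that the set $\mathscr{N}$ of preimages of $\mathscr{N}_0$ is where branches are undefined, and since we work on $\Omega\backslash\mathscr{N}$ every iterate $T^n$ is well defined and $C^1$ on $J_n$, so no measure-zero pathology interferes with the argument.
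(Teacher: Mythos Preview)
Your argument is the standard one and is correct in substance. The paper, in fact, does not give a proof at all: immediately after the proposition it simply says ``Both directions are well known'' and refers to Walters (1978). So there is nothing to compare against beyond noting that your sketch is precisely the classical expansion-plus-nested-intervals proof those references contain.

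Two small remarks on the execution. First, a harmless indexing slip: with your definition $J_n=I_{x_0}\cap T^{-1}(I_{x_1})\cap\cdots\cap T^{-n}(I_{x_n})$ one has $T^{n}(J_n)=I_{x_n}$, not $\Omega$ (it is $T^{n+1}$ that maps $J_n$ onto $\Omega$). This only helps, since $|I_{x_n}|\le 1$, so the mean-value bound $|J_n|\le |I_{x_n}|/\inf_{J_n}|(T^n)'|$ still gives what you need. Second, your worry about a uniform lower bound on $|T'|$ for the ``leftover'' iterates $T,T^2,\ldots,T^{m-1}$ is unnecessary and is the shakiest part of the write-up (nothing in the hypotheses forces $\inf_j\inf_{\bar I_j}|T_j'|>0$ at the endpoints). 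You can avoid the issue entirely: the intervals $J_n$ are nested, so it suffices to bound the subsequence $|J_{km}|\le c^{-k}$ using $|(T^m)'|\ge c$ and conclude $\mathrm{diam}(J_n)\to 0$ by monotonicity. With that adjustment the proof is clean.
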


Both directions are well known, see for instance \cite{walters1978a}
and \cite{walters1978b}.

\section{Constructing a chain from a map}\label{map_to_chain}

Let $\mu$ be a $T$-invariant measure defined on $\Omega=[0, 1]$.  We
now have the three ingredients of a probability space, namely the
sample space $\Omega=[0,1]$, with its Borel $\sigma$-algebra and the
$T$-invariant probability $\mu$.  Furthermore, the coding associated
to the map $T$ defines a sequence of random variables $(\code_{n})_{n
  \in \N}$ with values in the alphabet $A$.

Let us denote by $q(x_{m}^{n})$ (with $m\le n$ belong to $\Z$)
the cylinder probabilities on
$A^{\Z}$ defined by
$$
q\big(x_{m}^{n}\big)=
\mu\big\{\omega\in\Omega\,,
\,\code_{0}(\omega)=x_{n},\, \code_{1}(\omega)=x_{n-1},\,\ldots,
\code_{n-m}(\omega)=x_{m}
\big\}\;.
$$
The  time was reversed in the definition of $q$ to follow the usual
convention for  stochastic processes.

Kolmogorov's Existence Theorem implies that there exists a unique stationary
probability measure $\proba$ on $A^{\Z}$ such that for any integers
$m\le n$, and any sequence $x_{m}^{n}\in A^{n}_{m}$ we have
$$
\proba\big\{ X_{m}^{n}=x_{m}^{n}\big\}=q\big(x_{m}^{n}\big)\;,
$$
where $X_{n}\,:\, A^{\Z}\rightarrow A$  is the projection on the
$n^{\mathrm{th}}$ coordinate. The $(X_{n})_{n\in \Z}$ is in general a
chain of infinite order. The next theorem will give an explicit
expression for its family of transition probabilities
$$
p\big(b\,\big|\,a_{-\infty}^{-1}\big)=
\proba\big(X_{0}=b\,\big|\,X_{-\infty}^{-1}=a_{-\infty}^{-1}\big)\;.
$$

We denote by $\lambda$ the Lebesgue measure of the interval $\Omega=[0,1]$.

\begin{theorem}\label{maptochain}
Let $\Omega=[0,1]$ and let $T$ be a \ftm. Assume that the Lebesgue
measure $\lambda$ is invariant and ergodic with respect to $T$. Then
the family of transition probabilities of the associated chain of
infinite order is given by
$$
p\big(b\,\big|\,a_{-\infty}^{-1}\big)=\lim_{n\to\infty}
\frac{1}{\big|T'(\omega_{n})\big|}
$$
where $\omega_{n} $ is any point in $\Omega$, such that 
$$
\code_{0}^{n}(\omega_{n})=\big(b,a_{-1},\ldots,a_{-n}\big)\;.
$$
\end{theorem}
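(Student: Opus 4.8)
First I would reduce the statement to conditioning on a finite past. Fix $b\in A$ and a past $a_{-\infty}^{-1}$, and for $n\ge1$ put
\[
J_n=\big\{\omega\in\Omega:\code_0^n(\omega)=(b,a_{-1},\ldots,a_{-n})\big\}.
\]
Since $T$ is a full topological Markov map, $J_n$ is, up to the countable set $\mathscr{N}$, an open subinterval of $I_b$ on which $T$ agrees with the $C^2$ diffeomorphism $T_b$, and $T_b$ maps $J_n$ bijectively onto $T(J_n)=\{\omega:\code_0^{n-1}(\omega)=(a_{-1},\ldots,a_{-n})\}$. Unwinding the (time-reversed) definition of the cylinder probabilities $q$ with $\mu=\lambda$ gives $q(a_{-n},\ldots,a_{-1},b)=\lambda(J_n)$ and $q(a_{-n},\ldots,a_{-1})=\lambda(T(J_n))$, so that
\[
\P\big(X_0=b\,\big|\,X_{-n}^{-1}=a_{-n}^{-1}\big)=\frac{\lambda(J_n)}{\lambda(T(J_n))}.
\]
Because every branch extends to a $C^1$ map on $\overline{I_b}$, the change of variables formula gives $\lambda(T(J_n))=\int_{J_n}|T'(\omega)|\,d\lambda(\omega)$, and the mean value theorem for integrals yields a point $\widetilde\omega_n\in\overline{J_n}$ with $\lambda(T(J_n))=|T'(\widetilde\omega_n)|\,\lambda(J_n)$; hence $\P(X_0=b\mid X_{-n}^{-1}=a_{-n}^{-1})=1/|T'(\widetilde\omega_n)|$.

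Next I would check that $\lim_n 1/|T'(\omega_n)|$ exists and does not depend on the admissible choice of the $\omega_n$. The closed cylinders $\overline{J_n}$ are decreasing, and the uniform expanding property forces their lengths to tend (geometrically) to zero, so $\bigcap_n\overline{J_n}$ is a single point $\omega_\infty\in\overline{I_b}$. Any $\omega_n$ with $\code_0^n(\omega_n)=(b,a_{-1},\ldots,a_{-n})$ lies in $J_n$, hence $\omega_n\to\omega_\infty$; since $|T'|=|T_b'|$ is continuous and bounded away from zero on $\overline{I_b}$ (positivity is forced by the expanding property), we get $\lim_n|T'(\omega_n)|=|T_b'(\omega_\infty)|>0$, the same for all choices, and likewise for the points $\widetilde\omega_n\in\overline{J_n}$ of the first step. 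Therefore
\[
\lim_{n\to\infty}\frac1{|T'(\omega_n)|}=\frac1{|T_b'(\omega_\infty)|}=\lim_{n\to\infty}\P\big(X_0=b\,\big|\,X_{-n}^{-1}=a_{-n}^{-1}\big).
\]

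Finally, by L\'evy's upward martingale convergence theorem for the increasing filtration generated by $X_{-n}^{-1}$, $\P(X_0=b\mid X_{-n}^{-1})\to\P(X_0=b\mid X_{-\infty}^{-1})=p(b\mid X_{-\infty}^{-1})$ for $\P$-almost every sequence. Combined with the above, $p(b\mid a_{-\infty}^{-1})=\lim_n1/|T'(\omega_n)|$ for $\P$-almost every $a_{-\infty}^{-1}$; since the right-hand side is defined for every past (and, by the bounded distortion coming from the $C^2$ assumption, is continuous in $a_{-\infty}^{-1}$), it furnishes the asserted version of the transition probabilities. Here the invariance of $\lambda$ is what makes the pushforward $\P$ on $A^{\Z}$ a stationary chain, so that $q$ and $p$ are meaningful; ergodicity is a standing hypothesis but does not appear to be needed for the formula itself. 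I expect the only delicate points to be bookkeeping ones: keeping straight the time reversal in the definition of $q$, and checking that the change of variables and the continuity of $|T'|$ survive up to the partition endpoints $\eta_{b-1},\eta_b$ --- which is exactly what the hypothesis that each branch extends $C^2$ to the closure of its interval is meant to guarantee.
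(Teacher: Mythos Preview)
Your argument is correct and complete. The paper itself does not prove this theorem: it simply refers the reader to Walters (1978a, 1978b) and Ledrappier (1974). Your route --- writing the finite-past conditional probability as the ratio $\lambda(J_n)/\lambda(T(J_n))$, applying the change of variables and the mean value theorem on the $C^2$ branch $T_b$, using uniform expansion to shrink $\overline{J_n}$ to a single point so that continuity of $|T_b'|$ on $\overline{I_b}$ identifies the limit, and then invoking L\'evy's upward theorem to identify this limit with the transition probability --- is precisely the standard argument found in those references. Your side remarks (that ergodicity is not used in the formula itself, and that the $C^2$ extension to the closed intervals is exactly what makes the endpoint issues harmless) are also accurate.
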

For the proof of Theorem 3.1 we ferer the reader to \cite{walters1978a}, \cite{walters1978b} and \cite{ledrappier1974}.

In the general case, where the invariant abosutely continuous
invariant measure $\mu$ is not the Lebesgue measure $\lambda$, we have
the folowing result.

\begin{corollary}
Let $T$ be a topological Markov piecewise expanding map
on $\Omega$. Assume that the probability measure $\mu$ which is
absolutely continuous with respect to the Lebesgue measure $\lambda$ 
is invariant and ergodic with respect to $T$.
 Then the
family of transition probabilities of the associated chain of
infinite order is given by 
$$
p\big(b\,\big|\,a_{-\infty}^{-1}\big)
=\lim_{n\to\infty}\frac{g(\omega_{n})}{g(T(\omega_{n}))\big|T'(\omega_{n})\big|}
$$
where $g=d\mu/d\lambda$, and  $\omega_{n}$ is any point such that 
$$
\code_{0}^{n}(\omega_{n})=\big(b,a_{-1},\ldots,a_{-n}\big)\;.
$$
\end{corollary}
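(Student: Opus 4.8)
The plan is to reduce the Corollary to Theorem~\ref{maptochain} by changing the reference measure. Since $\mu$ is absolutely continuous with respect to $\lambda$ with density $g=d\mu/d\lambda$, and since $\mu$ is $T$-invariant, the density $g$ satisfies the Perron--Frobenius (transfer operator) fixed point equation: for $\lambda$-almost every $\omega$,
$$
g(\omega)=\sum_{y\,:\,T(y)=\omega}\frac{g(y)}{\big|T'(y)\big|}\;.
$$
First I would record this identity and note that, because $\lambda$ itself need not be $T$-invariant, the associated chain's transition probabilities cannot be read off directly from $1/|T'|$; instead one must weight by $g$. The natural device is to observe that the pushforward of $\mu$ under the coding map $\code$ is exactly the stationary law $\proba$ of the chain (this is how $\proba$ was constructed in Section~\ref{map_to_chain}, with $q$ defined through $\mu$), so the transition probabilities $p(b\mid a_{-\infty}^{-1})$ are conditional probabilities computed with respect to $\mu$ along the coding fibres.

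The key steps, in order, are: (i) express $p(b\mid a_{-\infty}^{-1})$ as the $n\to\infty$ limit of the ratio of cylinder probabilities $q(b,a_{-1},\dots,a_{-n},\dots)/q(a_{-1},\dots,a_{-n},\dots)$, i.e.\ as a limit of conditional probabilities $\mu(C_{n+1})/\mu(C_n)$ where $C_n$ is the $\omega$-set coded by $(a_{-1},\dots,a_{-n})$ in the first $n$ symbols and $C_{n+1}$ the set coded by $(b,a_{-1},\dots,a_{-n})$; (ii) write each $\mu(C_k)=\int_{C_k} g\, d\lambda$ and use the change of variables along the relevant inverse branch of $T$ (respectively $T^{n}$): the set $C_{n+1}$ is mapped diffeomorphically onto a fixed cylinder by the inverse branch selected by the symbols, and the Jacobian factor is $1/|(T^{n})'|$, producing $\lambda(C_k)\sim \lambda(\text{base cylinder})/|(T^{n})'(\omega_n)|$ up to the bounded-distortion correction; (iii) apply bounded distortion for $C^2$ expanding maps to control the variation of $g$ and of $|(T^{n})'|$ over $C_n$ and $C_{n+1}$, so that in the ratio all the long-branch Jacobian factors $1/|(T^{n})'|$ cancel except for one factor of $1/|T'(\omega_n)|$ coming from the extra symbol $b$; (iv) track the density: the leftover ratio of densities is $g(\omega_n)$ in the numerator (evaluated at a point coded by $(b,a_{-1},\dots,a_{-n})$) against $g(T(\omega_n))$ in the denominator (since dropping the symbol $b$ corresponds to applying $T$ once), giving precisely $g(\omega_n)/\bigl(g(T(\omega_n))\,|T'(\omega_n)|\bigr)$; (v) let $n\to\infty$ and invoke the continuity/summable-distortion estimates to see the limit exists and is independent of the choice of $\omega_n$.

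The main obstacle is step~(iii)--(iv): one has to be careful that $g$, a priori only an $L^1$ density, can be evaluated pointwise along the coding fibres and that the ratio $g(\omega_n)/g(T(\omega_n))$ is well-defined and convergent. The clean way around this is to not argue with $g$ directly but to use the transfer-operator fixed point equation together with bounded distortion: iterating the Perron--Frobenius identity shows that $\mu(C_n)=\int_{\text{base}} (\mathcal L^n \mathbf 1_{C_n\text{-branch}})\,d\lambda$ telescopes, and the ratio $\mu(C_{n+1})/\mu(C_n)$ isolates exactly one application of the single-branch transfer weight $g(\cdot)/(g(T\cdot)|T'(\cdot)|)$ at scale controlled by the branch containing $\omega_n$; bounded distortion then forces the diameters of $C_n$ to shrink geometrically (by the expanding property and Theorem~\ref{existinv}-type non-degeneracy), so any two admissible choices of $\omega_n$ differ negligibly and the limit is the claimed expression. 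An alternative, essentially equivalent route is to apply Theorem~\ref{maptochain} to the \emph{same} map but after a coordinate change that conjugates $\mu$ to $\lambda$ — however, such a conjugacy is only a homeomorphism, not $C^2$, so the direct transfer-operator computation above is the safer path.
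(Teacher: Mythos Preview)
Your outline is essentially sound, but the paper takes precisely the route you dismiss in your final sentence: it conjugates $T$ by the distribution function $G(t)=\mu([0,t])$ of $\mu$, sets $T_0=G\circ T\circ G^{-1}$, observes that Lebesgue measure is $T_0$-invariant and ergodic, applies Theorem~\ref{maptochain} to $T_0$, and reads off the claimed formula from the chain rule $T_0'=\bigl(g\circ T\circ G^{-1}\bigr)\cdot\bigl(T'\circ G^{-1}\bigr)\cdot(1/g\circ G^{-1})$. The whole argument is three lines once the conjugation is set up.

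The point you are missing---and it simultaneously answers your objection to the conjugation approach and fills the gap you yourself flag in steps (iii)--(iv)---is a regularity input: by results of Buzzi (1997) and Liverani (1995), for a topological Markov piecewise expanding $C^2$ map the invariant density $g=d\mu/d\lambda$ is continuous and bounded away from zero, hence $G$ is a genuine $C^1$ diffeomorphism, not merely a homeomorphism. This is exactly what makes the chain-rule computation legitimate, and it is also exactly what lets you evaluate $g(\omega_n)$ and $g(T(\omega_n))$ pointwise and pass to the limit in your direct argument. Without this regularity fact your own proof has the same hole you attribute to the conjugation approach; with it, the conjugation proof is far shorter. Your bounded-distortion route would work once you invoke the same regularity, but it essentially re-derives Theorem~\ref{maptochain} inside the corollary rather than reducing to it.
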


\begin{proof}
Let $G$ be the distribution of $\mu$ defined in the usual way by
$$
G(t)=\mu\big([0,t])\;.
$$
Obviously $G$ is a non decreasing function which is also continuous
since $\mu$ is absolutely continuous with respect to the Lebesgue
measure $\lambda$. By a theorem of Buzzi (1997)\nocite{buzzi} (see also
Liverani (1995)\nocite{liverani}), $\mu$ is equivalent to
the $\lambda$, and $g=d\mu/d\lambda=G'$ is a continuous non-vanishing
function. In other words  $G$ is a $C^{1}$ diffeomorphism. 

Consider $G^{-1}$ as a random variable defined on
the probability space $(\Omega,\parti_{\infty}, \lambda)$. This fact
together with the invertibility of $G$ implies that the Lebesgue
measure $\lambda$ is invariant and ergodic with respect to the map 
$T_{0}$ defined  by 
$$
T_{0}=G\circ T\circ G^{-1}\;.
$$
Theorem \ref{maptochain} applies to $T_{0}$, and the corollary follows
by the chain rule. 
\end{proof}

\section{Constructing a map from a chain.}\label{chain_to_map}

Let $(X_{n})$ be a stationary ergodic stochastic chain taking values
in the finite alphabet $A=\{1,\ldots, \cardin\}$, and defined on a
probability space $\big(A^{\Z},\mathscr{F},\proba\big)$. Let us assume
that the law $\proba $ of the chain has no atom. Our goal is to define a map  $T: \Omega \rightarrow \Omega$, where $\Omega =[0,1]$, such that the construction of Section \ref{map_to_chain} recovers the chain $(X_{n})$.
The map $T$ will be defined by a conjugation to the shift $\shift$ through a map $h:A_{-\infty}^0 \rightarrow \Omega$ defined below.

We define a  distance on $A_{-\infty}^{0}$ as follows.
\begin{definition}\label{dist}
First of all, for two sequences $x_{-\infty}^{0}$ and $y_{-\infty}^{0}$, denote by
$\delta\big(x_{-\infty}^{0}, y_{-\infty}^{0}\big)$  
 the nearest position to the origin
where these two sequences differ, namely
$$
\delta\big(x_{-\infty}^{0}, y_{-\infty}^{0}\big)=\min\left\{n \ge 0 :\,x_{-n}\not=
y_{-n}\right\}\;. 
$$ 
For a fixed number $0<\zeta<1$, we define the distance $d$ on $A_{-\infty}^{0}$
by
\[
d\big(x_{-\infty}^{0}, y_{-\infty}^{0}\big)=\zeta^{\delta\big(x_{-\infty}^{0}, y_{-\infty}^{0}\big)}\;.
\]
\end{definition}

We denote by $<$ the lexicographic order on $A_{-\infty}^0$. 
Namely $x_{-\infty}^{0} <y_{-\infty}^{0}$, if for some $m\ge 0$, we
have $x_{-(m-1)}^0=y_{-(m-1)}^0$ and $x_{-m}<y_{-m}$. 
For a point $x_{-\infty}^0\in A_{-\infty}^0$, we denote by
$J(x_{-\infty}^{0 })$ the set of points 
   $$
J(x_{-\infty}^{0 })=\big\{y_{-\infty}^{0}\;\big |\; y_{-\infty}^{0}
\le  x_{-\infty}^{0} \big\}\;. 
$$ 
We define the map $h$ from $A_{-\infty}^0$ to $\Omega$ by 
$$
h\big(x_{-\infty}^{0}\big)=\proba\big(J(x_{-\infty}^{0})\big)\;.
$$ 
 
Before stating the properties of the map $h$, we need to define a
countable set $\mathscr{Q}$ of exceptional codes, given by
$$
\mathscr{Q}=\bigcup_{j=1}^{\cardin-1}
\big\{\cardin_{-\infty}^{-1}j,1_{-\infty}^{-1}(j+1)\big\}  
\bigcup_{k=0}^{\infty}\bigcup_{x_{-k}^{0}\in A_{-k}^{0}}
\bigcup_{j=1}^{\cardin-1}\big\{\cardin_{-\infty}^{-1}jx_{-k}^{0},
1_{-\infty}^{-1}(j+1)x_{-k}^{0}\big\}\;,
$$
where $\cardin_{-\infty}^{-1}$ and $1_{-\infty}^{-1}$ denote the
sequences identically equal to $\cardin$ and $1$, respectively.

\begin{proposition}\label{hcontinu}
Let $p$ be a family of transition probabilities satisfying the  non-nullness condition \ref{nonnull}. 
The map $h$ defined above has the following properties

\begin{enumerate}[i)]
\item $h$ is non decreasing on $\Omega$ and strictly increasing
  outside $\ambigus$;
\item  $h$ is continuous;
\item  $h$ is invertible except on the
countable set $h(\ambigus)$, and the set of preimages of any point in
$h(\ambigus)$ has cardinality at most two;
\item the inverse function $h^{-1}$ is continuous outside $h(\ambigus)$;
\item  the image of $\proba$ by $h$ is the Lebegue measure on $\Omega$;
\item finally $h$ is sujective.
\end{enumerate} 
\end{proposition}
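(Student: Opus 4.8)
The plan is to treat the six assertions essentially as consequences of the fact that $h(x_{-\infty}^0) = \proba(J(x_{-\infty}^0))$ is a "distribution function'' for $\proba$ read through the lexicographic order, exactly as the ordinary cumulative distribution function of a real random variable. First I would record the obvious monotonicity: if $x_{-\infty}^0 \le y_{-\infty}^0$ then $J(x_{-\infty}^0) \subseteq J(y_{-\infty}^0)$, so $h$ is non-decreasing (item i, first half). For strict monotonicity outside $\ambigus$: if $x_{-\infty}^0 < y_{-\infty}^0$ and the pair is not one of the exceptional pairs in $\ambigus$, then the ``gap'' $J(y_{-\infty}^0) \setminus J(x_{-\infty}^0)$ contains a full cylinder $C(a_m^0)$ for some finite word, and by Theorem~\ref{existinv} (or rather the non-nullness hypothesis together with the construction of $\proba$) every cylinder has positive probability; hence $h(y_{-\infty}^0) > h(x_{-\infty}^0)$. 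I expect the bookkeeping here — identifying precisely when the gap fails to contain a cylinder, which is exactly the content of the definition of $\ambigus$ — to be the main obstacle, and the place where the somewhat elaborate definition of $\ambigus$ has to be matched term by term against the lexicographic order.

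Next, for continuity (item ii) I would use that $\proba$ has no atom. Given a sequence $x_{-\infty}^{0,(k)} \to x_{-\infty}^0$ in the metric $d$, the symmetric differences $J(x^{(k)}) \,\triangle\, J(x)$ are nested down to (a subset of) a single point-fiber, whose $\proba$-measure is zero by the no-atom hypothesis on the law and the fact that fibers over points of $\Omega$ are at most two points; hence $h(x^{(k)}) \to h(x)$. Combined with monotonicity and the fact that the lexicographic ``interval'' $A_{-\infty}^0$ is a compact order space, this forces $h$ to be continuous, and also gives surjectivity onto $[0,1]$ by the intermediate value property (item vi). For item v, that $h_*\proba = \lambda$: it suffices to check $h_*\proba((-\infty, t]) = t$ for every $t$, equivalently $\proba(h^{-1}([0,t])) = t$; since $h$ is monotone and continuous, $h^{-1}([0,h(x)])$ is $\overline{J(x)}$ up to a $\proba$-null set, so $h_*\proba([0,h(x)]) = \proba(J(x)) = h(x)$, and as $h$ is surjective these values $h(x)$ exhaust $[0,1]$; pushing forward a Stieltjes-type identity then yields $h_*\proba = \lambda$.

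Finally, items iii and iv concern injectivity. Two distinct sequences $x_{-\infty}^0 \ne y_{-\infty}^0$ with, say, $x < y$ satisfy $h(x) = h(y)$ iff $\proba(J(y) \setminus J(x)) = 0$; by the strict monotonicity already established this can only happen when $(x,y)$ — or more precisely when the order-interval between them — reduces to an exceptional configuration, i.e. $x$ and $y$ both lie in $\ambigus$ and $h(x) = h(y) \in h(\ambigus)$. One checks that the order-interval strictly between such an exceptional pair contains no sequence at all except possibly the pair members themselves (the ``$\cardin\cdots$'' sequence and the ``$1\cdots$'' sequence representing the same cut), so at most two sequences map to a given value, and that value lies in the countable set $h(\ambigus)$; this gives iii. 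For iv, continuity of $h^{-1}$ off $h(\ambigus)$ follows because a continuous strictly increasing bijection between compact spaces has continuous inverse — here one restricts to the complement of the countable exceptional set, where $h$ is a genuine order-isomorphism onto its image, and invokes compactness of $A_{-\infty}^0$ so that $h^{-1}$ of a closed set is closed. The one genuinely delicate point throughout, as noted, is the exact match between the algebraic description of $\ambigus$ and the "places where consecutive sequences in the lexicographic order leave no cylinder between them,'' so I would isolate that as a preliminary combinatorial lemma about the lexicographic order on $A_{-\infty}^0$ and then feed it into all six items uniformly.
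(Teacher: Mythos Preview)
Your plan is essentially the paper's own proof: strict monotonicity via a cylinder trapped in the gap (using that all cylinders have positive $\proba$-measure), continuity via the no-atom property, items (iii)--(iv) as formal consequences, and (v) via the distribution-function identity $\proba(h^{-1}([0,z]))=z$. The only wrinkle is your continuity argument, where you invoke ``fibers over points of $\Omega$ have at most two points'' --- that is item (iii), not yet available; the paper instead bounds the symmetric difference directly by $J(y^{(n)})\,\triangle\,J(x)\subset C(x_{-k}^0)$ and uses only that $\proba(C(x_{-k}^0))\to 0$ by atomlessness, which cleanly avoids the circularity.
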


\begin{proof}
We first prove that 
 the map $h$ is injective except on the countable set $\ambigus$.
Let  $x_{-\infty}^{0}<y_{-\infty}^{0}$. This means
that $x_{0}<y_{0}$, or there exists an integer $k\ge0$ such that
$x_{-k}^{0}=y_{-k}^{0}$, and  $x_{-(k+1)}<y_{-(k+1)}$. Assume
$x_{-\infty}^{0}\notin \ambigus$. This implies that for infinitely
many indices $n$, we have $x_{-n}\le \cardin-1$. Let $m>k$ be such an
index. For any $z_{-\infty}^{0}$ in the cylinder $C(\cardin
x_{-(m-1)}^{0})$ we  have
$$
x_{-m}^{0}<z_{-m}^{0}<y_{-m}^{0}\;.
$$
Therefore 
$$
J(x_{-m}^{0})\cap C(\cardin x_{-(m-1)}^{0})=\emptyset\;,\qquad\mathrm{and}\qquad
C(\cardin x_{-(m-1)}^{0})\subset J(y_{-m}^{0})\;.
$$
From Theorem \ref{existinv} we have $\proba(C(\cardin x_{-(m-1)}^{0}))>0$,
hence 
$$
h\big(x_{-m}^{0}\big)=\proba\big(J(x_{-m}^{0})\big)<
\proba\big(J(y_{-m}^{0})\big)=h\big(y_{-m}^{0}\big)\;.
$$
The case where $y_{-\infty}^{0}\notin \ambigus$ can be treated
similarly. 

If $x_{-\infty}^{0} \in \ambigus$ and $y_{-\infty}^{0}\in \ambigus$
but
$$
\big(x_{-\infty}^{0},y_{-\infty}^{0}\big)\neq
\big(\cardin_{-\infty}^{-1}x_{0},1_{-\infty}^{-1}(x_{0}+1\big) 
$$
and for any $k\ge 0$, 
$$
y_{-\infty}^{0}\neq 1_{-\infty}^{-k-1}(x_{-k-1}+1)x_{-k}^{0}
$$
then there exists $\tilde x_{-\infty}^{0}\notin \ambigus$ and such
that $x_{-\infty}^{0}<\tilde x_{-\infty}^{0}< y_{-\infty}^{0}$. From
above it follows that
$$
h\big(x_{-\infty}^{0}\big)<h\big(\tilde x_{-\infty}^{0}\big)
< h\big(y_{-\infty}^{0}\big)\;.
$$

Finally, if for some $a\in \{1,\ldots, \cardin-1\}$ we have either
\[
x_{-\infty}^0=\cardin_{-\infty}^{-1)}a\qquad \mbox{and}\qquad  y_{-\infty}^0=1_{-\infty}^{-1}(a+1)\, ,
\] 
or 
\[
x_{-\infty}^0=\cardin_{-\infty}^{-(k+2)}ax_{-k}^0 \qquad \mbox{and}
\qquad y_{-\infty}^0=1_{-\infty}^{-(k+2)}(a+1)x_{-k}^0\, ,
\]
for some $k\ge 1$, then $h(x_{-\infty}^0)=h(y_{-\infty}^0)$. This concudes the proof of (i).

Now let us prove that the map $h$ is continuous. Take 
$x_{-\infty}^{0}\in A_{-\infty}^{0}$, and  let $(y_{-\infty}^{0}(n))$
be a sequence in $A_{-\infty}^{0}$ converging to $x_{-\infty}^{0}$ in
the metric defined in \ref{dist}. This implies that for any
$k$ there exists $\bar n(k)$ such that for any $n\ge\bar n(k) $,
$y_{-k}^{0}(n)= x_{-k}^{0}$. This implies
$$
J(y_{-\infty}^{0}(n))\Delta J(x_{-\infty}^{0})\subset C(x_{-k}^{0})\;,
$$
and therefore
$$
\big|h(y_{-\infty}^{0}(n))-h(x_{-\infty}^{0})\big|\le \proba\big(C(x_{-k}^{0})\big)\, .
$$
By Theorem \ref{existinv} the probability measure $\proba$ has no atoms, hence
$\proba\big(C(x_{-k}^{0})\big)$ tends to $0$, when $k$ tends to $\infty$, proving that $h$ is
continuous. This concludes the proof of (ii).

Assertion (iii) and (iv) follow immediately from (i) and (ii). 

Finally to prove (v), take $z\in \Omega\backslash h(\ambigus)$. The
inverse value $h^{-1}(z)$ is uniquely defined, and therefore 
$$
\lambda\big([0,z]\big)=z=h\big(h^{-1}(z)\big)=
\proba\big(J(h^{-1}(z))\big)=\proba\big(h^{-1}([0,z])\big)\;. 
$$ Since the measure $\proba$ and the Lebesgue measure have no atoms,
the same result holds for the countable set of points in
$h(\ambigus)$.  This implies by standard measure theoretic arguments
(see for example Breiman 1992\nocite{breiman}) that $\lambda$ is the
image of $\proba)$ by $h$. This concludes the proof of (v).

Finally (vi) follows from the fact that the measure $\proba$ has no
atom by Theorem \ref{existinv} the map $h$ is continuous and hence
surjective.
\end{proof}

We define the map $T$ on $\Omega\backslash\ambigus$ by
$$
T=h\circ \shift \circ h^{-1}\;.
$$
More explicitly, for  $z\in \Omega\backslash \ambigus$ we have
\begin{equation}\label{Tegal}
T(z) =\proba\big(J(\shift h^{-1}(z))\big) =\proba\big(\shift
J(h^{-1}(z))\big)\, .
\end{equation}

\begin{theorem}\label{existe} Let $p$ be a family of transition
  probabilities satisfying the non-nullness and the
  continuity conditions \ref{nonnull} and \ref{continuity}.  Then
\begin{enumerate} 
\item The map $T$ defined above can be continously extended to a
  monotone increasing map on each inteval $I_j=]\eta_{j-1},\eta_{
    j}[$, with $j=1,\ldots,\cardin$, with end points
    $0=\eta_{0}<\eta_{1}<\ldots<\eta_{\cardin}=1$ defined by \[
    \eta_{k}=h(\cardin_{-\infty}^{-1}k)=h(1_{-\infty}^{-1}(k+1))\,
    \mbox{, for}\, k=1,\dots, \cardin-1\,.  \]
\item The extended map (also denoted by $T$) is a topological Markov
  map and the Lebesgue measure is invariant by $T$ and
  ergodic. Moreover the regular versions of the conditional
  probabilities associated to the sequence of dynamical partitions are
  given by $p$.
\item The map $T$ is differentiable outside $h(\ambigus)$ and for each
  $\omega\in \Omega\backslash h(\ambigus)$ we have
\[
T'(\omega)=\frac{1}{p(h^{-1}(\omega)_0\, |\, h^{-1}(\omega)_{-\infty}^{-1})}\, .
\]
In this formula we denote the successive elements of the sequence
$h^{-1}(\omega) \in A_{-\infty}^0$ by $h^{-1}(\omega)_{-\infty}^{0}$.
\item For  $\omega \in h(\ambigus)$, wih
\[
\omega=h\big(\cardin_{-\infty}^{-(k+2)}az_{-k}^0\big)
=h\big(1_{-\infty}^{-(k+2)}(a+1)z_{-k}^0)\big)\, ,
\] 
for some $
a  \in \{1,\ldots, \cardin-1\}
$ and some integer $k\ge -1$, then the left and right derivatives of
$T$ at $\omega$ exist and are given by
\[
\frac{1}{p\big(z_0\,\big|\, z_{-k}^{-1}a\cardin_{-\infty}^{-(k+2)}\big)} \quad
\mbox{and} \quad \frac{1}{p\big(z_0\,\big|\,
  z_{-k}^{-1}(a+1)1_{-\infty}^{-(k+2)}\big)}\,,
\]
respectively.
\item In particular, if $p$ is such that for any $a \in \{1,\ldots,
  \cardin-1\}$ and any integer $k\ge 0$ and any $z^0_{-k}$, we have
\begin{equation}\label{equal}
p\big(z_0\,\big|\, z_{-k}^{-1}a\cardin_{-\infty}^{-(k+2)}\big)=
p\big(z_0\,\big|\, z_{-k}^{-1}(a+1)1_{-\infty}^{-(k+2)}\big)\,,
\end{equation}
then the map $T$ is piecewise $C^1$.
\item If the continuity rate $\beta_k$, defined in \ref{betak}, decays
  exponentially fast, and conditions \eqref{equal} are satisfied, then
  the map $T$ is piecewise $C^{1+\alpha}$, where $\alpha >0$ depends
  on the exponential rate of decay of $\beta_k$.
\end{enumerate}
 \end{theorem}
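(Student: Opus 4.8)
\emph{Plan.} I would work throughout with the conjugacy $T=h\circ\shift\circ h^{-1}$ of \eqref{Tegal}, transporting everything from the symbolic side and using Proposition \ref{hcontinu}, the fact that non-nullness (Definition \ref{nonnull}) forces $\proba(C(a_m^n))>0$, $\inf p>0$ and $\sup p\le 1-(\cardin-1)\inf p<1$, and that $\proba$ has no atom; on $A_{-\infty}^0$ the map $\shift$ forgets the $0$-th symbol, is $\cardin$-to-one, and is continuous (indeed $d(\shift x,\shift y)=\zeta^{-1}d(x,y)$ when $x,y$ agree at the origin). For part (1): if $z<z'$ lie in the same $I_j$ then $h^{-1}(z)<h^{-1}(z')$ have equal $0$-th symbol, so their first discrepancy is at a negative coordinate, which $\shift$ keeps negative while preserving the lexicographic order, giving $T(z)\le T(z')$; continuity on $I_j\setminus h(\ambigus)$ is by composition. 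As $z\uparrow\eta_j$, $h^{-1}(z)\to\cardin_{-\infty}^{-1}j$ and $\shift(\cardin_{-\infty}^{-1}j)=\cardin_{-\infty}^0$, so $T(z)\to\proba(A_{-\infty}^0)=1$; symmetrically $T(z)\to 0$ as $z\downarrow\eta_{j-1}$. This identifies the $\eta_k$, produces the continuous monotone extension $T\colon\bar I_j\to[0,1]$, and shows $\overline{T(I_j)}=\Omega$, i.e. $T$ is (full) topological Markov. The one genuinely symbolic check is continuity across interior points of $h(\ambigus)$: for $\omega=h(\cardin_{-\infty}^{-(k+2)}az_{-k}^0)=h(1_{-\infty}^{-(k+2)}(a+1)z_{-k}^0)$ with $k\ge 0$, $\shift$ carries the two preimages to $\cardin_{-\infty}^{-(k+1)}az_{-k}^{-1}$ and $1_{-\infty}^{-(k+1)}(a+1)z_{-k}^{-1}$, again the two $h$-preimages of one point, so $T$ has no jump at $\omega$.

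For the rest of (2): $h_*\proba=\lambda$ (Proposition \ref{hcontinu}(v)) and stationarity gives $\shift_*\proba=\proba$ on $A_{-\infty}^0$, so $T_*\lambda=(h\circ\shift)_*\proba=\lambda$; ergodicity of $(\Omega,\lambda,T)$ follows since $h$ is a $\proba$-a.e.\ bijection conjugating $\shift$ to $T$ and $(A_{-\infty}^0,\proba,\shift)$ is a factor of the ergodic shift on $(A^{\Z},\proba)$. For the recovery of $p$: $T'=1/p\ge(1-(\cardin-1)\inf p)^{-1}>1$, so $T$ is uniformly expanding, and since $\code_k(\omega)=h^{-1}(\omega)_{-k}$ Theorem \ref{maptochain} gives $1/|T'(\omega_n)|=p(b\,|\,h^{-1}(\omega_n)_{-\infty}^{-1})$, whose conditioning sequence agrees with $a_{-\infty}^{-1}$ on the last $n$ symbols, so by Definition \ref{continuity} the limit is $p(b\,|\,a_{-\infty}^{-1})$ (alternatively, and without a $C^2$ hypothesis, directly from $\lambda(\code_0^n=(b,a_{-1},\dots,a_{-n}))=\proba(C(a_{-n}^{-1}b))$ and the cylinder formula).

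Parts (3) and (4) are the core. For $z<z'$ in the same $I_j$, using $h_*\proba=\lambda$, absence of atoms, and monotone continuity of $T$,
\[
z'-z=\proba(E),\qquad T(z')-T(z)=\proba(\shift E),\qquad E=\{\,y:h^{-1}(z)\preceq y\preceq h^{-1}(z')\,\}.
\]
For $z'$ near $z$, Proposition \ref{hcontinu}(iv) makes $h^{-1}(z')$ close to $x:=h^{-1}(z)$ in $d$, so every $y\in E$ has $y_0=x_0$, $\shift$ restricts to a bijection from $E$ onto the lexicographic interval $[x_{-\infty}^{-1},h^{-1}(z')_{-\infty}^{-1}]$ of $A_{-\infty}^{-1}$, and $p$-invariance of $\proba$ gives $\proba(E)=\int_{\shift E}p(x_0\,|\,w)\,d\proba(w)$. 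Thus the difference quotient is the reciprocal of the $\proba$-average of $p(x_0\,|\,\cdot)$ over $\shift E$; as $z'\to z$, $\shift E$ lies in the cylinder of sequences agreeing with $x_{-\infty}^{-1}$ on the first $\delta(h^{-1}(z),h^{-1}(z'))-1\to\infty$ coordinates, so by Definition \ref{continuity} this average tends to $p(x_0\,|\,x_{-\infty}^{-1})$, proving (3). Part (4) is the same computation run one-sidedly at $\omega\in h(\ambigus)$, taking for ``$x$'' the larger preimage $1_{-\infty}^{-(k+2)}(a+1)z_{-k}^0$ when $z'\downarrow\omega$ and the smaller one $\cardin_{-\infty}^{-(k+2)}az_{-k}^0$ when $z'\uparrow\omega$; reading off the $0$-th symbol and past of each gives the two stated values, the case $k=-1$ covering the one-sided derivatives at the partition points $\eta_a$.

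For (5): under \eqref{equal} the two one-sided values of (4) agree at every interior point of $h(\ambigus)$, so $T'$ exists on all of each $\bar I_j$; moreover $\omega\mapsto h^{-1}(\omega)$ is continuous off $h(\ambigus)$ and $x\mapsto 1/p(x_0\,|\,x_{-\infty}^{-1})$ is $d$-continuous (Definition \ref{continuity}), while \eqref{equal} makes the jump of $h^{-1}$ across $h(\ambigus)$ harmless, so $T'\in C^0(\bar I_j)$. For (6) I would show $T'$ is $\alpha$-Hölder on each $\bar I_j$ with $\alpha=\log\theta/\log(\inf p)$, $\theta<1$ the decay rate of $\beta_k$. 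Since $h^{-1}$ is not globally Hölder one cannot simply compose: instead, for $\omega<\omega'$ in $\bar I_j$ whose $h^{-1}$-images first differ at coordinate $-m$ one shows there is always $\eta^{*}\in h(\ambigus)$ with $\omega\le\eta^{*}\le\omega'$ and, using that a common cylinder of depth $m$ has $\proba$-measure $\ge(\inf p)^{m}$, that $|\omega-\eta^{*}|\ge(\inf p)^{m+1}$; since the pasts agree far out, $|T'(\omega)-T'(\eta^{*})|\lesssim\beta_{m-1}\le C\theta^{m-1}\le C'|\omega-\eta^{*}|^{\alpha}$, and likewise on the other side, so the triangle inequality through $\eta^{*}$ (its jump vanishing by \eqref{equal}) finishes it. I expect (3) to be the main obstacle: converting convergence of ratios of cylinder probabilities into a genuine two-sided derivative requires the precise identification of $h^{-1}([z,z'])$ as a lexicographic interval on which $\shift$ is injective, the use of $p$-invariance to write $\proba(E)$ as an integral of $p(x_0\,|\,\cdot)$, and, decisively, that $z'\to z$ in $\Omega$ forces $h^{-1}(z')\to h^{-1}(z)$ symbolically so the conditioning sequences grow; the secondary difficulty in (6) is precisely that this last implication is only locally, not uniformly, quantitative — $h^{-1}$ is merely locally Hölder near points of $h(\ambigus)$ — which forces the Hölder estimate to be routed through the nearest exceptional point.
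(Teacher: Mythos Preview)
Your treatment of parts (1)--(5) is essentially the paper's. The key identity you invoke for (3), $\proba(E)=\int_{\shift E}p(x_0\mid w)\,d\proba(w)$ with $E=h^{-1}([z,z'])$ a lexicographic interval inside a fixed $I_j$, is precisely the content of the paper's Lemma~\ref{integrale}, which is stated there in the equivalent integral form $T(v)-T(u)=\int_u^v p(h^{-1}(\omega)_0\mid h^{-1}(\omega)_{-\infty}^{-1})^{-1}\,d\lambda(\omega)$ (the two versions are related by the change of variable $\omega=h(y)$). The paper then reads off (3)--(5) from that formula by the fundamental theorem of calculus, while you compute the difference quotient as the reciprocal of an average and pass to the limit; these are the same computation. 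For the ``recovery of $p$'' in (2) the paper takes your alternative route via the cylinder identity $\lambda(I_{x_{-k}^0})=\proba(C(x_{-k}^0))$ directly, not via Theorem~\ref{maptochain}; your first option would be circular since that theorem assumes a piecewise $C^2$ map.

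Your sketch of (6) has a genuine gap. The claimed lower bound $|\omega-\eta^{*}|\ge(\inf p)^{m+1}$ is false: if $h^{-1}(\omega)$ carries a long constant stretch $h^{-1}(\omega)_{-m-1}=\cdots=h^{-1}(\omega)_{-m-N}=\cardin$ (and symmetrically $h^{-1}(\omega')$ a stretch of $1$'s), then every point of $h(\ambigus)$ between $\omega$ and $\omega'$ is within $(\sup p)^{m+N}$ of one of them, so no such $\eta^*$ exists once $N$ is large. In this regime $m$ stays fixed while $|\omega-\omega'|\to 0$, and routing through $\eta^*$ only improves the naive bound $\beta_{m-1}$ to $\beta_m$, which is still useless. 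The paper's proof handles exactly this by a three-case split governed by $|\omega-\omega'|^{1/2}\,\Gamma^{m}$ (with $\Gamma=1/\inf p$): when this quantity is bounded below one obtains a lower bound on $m$ in terms of $\log|\omega-\omega'|$ and the direct estimate suffices; when it is small one \emph{deduces} that $h^{-1}(\omega)$ and $h^{-1}(\omega')$ must carry constant stretches of $\cardin$'s and $1$'s of length at least $M/2$ (with $M\sim -\log|\omega-\omega'|/\log\gamma$, $\gamma=1/\sup p$), so that each agrees with the corresponding member of the exceptional pair at depth $m+M/2$, and then condition~\eqref{equal} together with $\beta_{m+M/2}\le C\rho^{M/2}$ yields the H\"older bound. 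Your diagnosis that the estimate must be routed through an exceptional point is right; what is missing is this quantification of the length of the constant stretch in terms of $|\omega-\omega'|$, which is where the interplay between $\Gamma$, $\gamma$ and the exponential rate $\rho$ actually produces the exponent $\alpha$.
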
 

The proof of Theorem \ref{existe} will use several times the following lemma

\begin{lemma}\label{integrale}
For any pair of points $u <v$ in $\Omega\backslash h(\ambigus)$ and
belonging to the same monotonicity interval $I_j=]\eta_{j-1},\eta_{
    j}[$ of $T$, for any $j=1,\ldots,\cardin-1$, we have
\[
T(v)-T(u)=\int_u^v\frac{\lambda(d\omega)}{p(h^{-1}(\omega)_0\, 
|\, h^{-1}(\omega)_{-\infty}^{-1})}\, .
\]
\end{lemma}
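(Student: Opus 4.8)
The plan is to evaluate each side of the asserted identity and to recognise both as the same $\proba$-measure of a lexicographic interval of pasts. Fix $j$ and $u<v$ in $I_j\setminus h(\ambigus)$, and set $a_{-\infty}^0:=h^{-1}(u)$ and $b_{-\infty}^0:=h^{-1}(v)$. By Proposition~\ref{hcontinu}~(i),(iii) the map $h$ is injective and strictly increasing off $\ambigus$, and $u,v\notin h(\ambigus)$; hence $a_{-\infty}^0,b_{-\infty}^0\notin\ambigus$ and $a_{-\infty}^0<b_{-\infty}^0$. Since moreover $u$ and $v$ lie in $I_j=\,]\,h(1_{-\infty}^{-1}j),h(\cardin_{-\infty}^{-1}j)\,[\,$ and $h$ is monotone, it follows that $a_0=b_0=j$, and therefore $\{a_{-\infty}^0<y_{-\infty}^0<b_{-\infty}^0\}\subset\{y_0=j\}$.

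For the left-hand side, recall $T=h\circ\shift\circ h^{-1}$ and $h(x_{-\infty}^0)=\proba(J(x_{-\infty}^0))$, so that $T(u)=\proba\big(J(\shift a_{-\infty}^0)\big)$ and $T(v)=\proba\big(J(\shift b_{-\infty}^0)\big)$. Since $(\shift a_{-\infty}^0)_{-k}=a_{-k-1}$, spelling out the lexicographic order gives
\[
J(\shift a_{-\infty}^0)=\big\{y_{-\infty}^0:(y_0,y_{-1},y_{-2},\dots)\le_{\mathrm{lex}}(a_{-1},a_{-2},a_{-3},\dots)\big\},
\]
and, $\proba$ being stationary, the vector $(y_0,y_{-1},y_{-2},\dots)$ has the same law under $\proba$ as $(y_{-1},y_{-2},y_{-3},\dots)$; therefore $\proba\big(J(\shift a_{-\infty}^0)\big)=\proba\big(\{y_{-\infty}^{-1}:y_{-\infty}^{-1}\le a_{-\infty}^{-1}\}\big)$, and the same for $b_{-\infty}^0$. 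As $a_0=b_0$, the relation $a_{-\infty}^0<b_{-\infty}^0$ is just $a_{-\infty}^{-1}<b_{-\infty}^{-1}$, and subtracting the two identities yields
\[
T(v)-T(u)=\proba\big(\{y_{-\infty}^{-1}:a_{-\infty}^{-1}<y_{-\infty}^{-1}<b_{-\infty}^{-1}\}\big),
\]
the single boundary sequences carrying no $\proba$-mass since the past marginal of $\proba$ is non-atomic (an atom there would force the sequence to be periodic, whence, by stationarity, $\proba$ would have an atom, contradicting Theorem~\ref{existinv}).

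For the right-hand side, by Proposition~\ref{hcontinu}~(v) the image of $\proba$ under $h$ is $\lambda$, and by (i),(iii) one has $h^{-1}\circ h=\mathrm{id}$ $\proba$-almost everywhere; the substitution $\omega=h(y_{-\infty}^0)$ thus turns the integral into
\[
\int_{A_{-\infty}^0}\frac{\mathbf{1}\{u<h(y_{-\infty}^0)<v\}}{p(y_0\mid y_{-\infty}^{-1})}\,d\proba(y_{-\infty}^0)
=\int_{\{y_0=j\}}\frac{\mathbf{1}\{a_{-\infty}^0<y_{-\infty}^0<b_{-\infty}^0\}}{p(j\mid y_{-\infty}^{-1})}\,d\proba(y_{-\infty}^0),
\]
where the second equality uses the monotonicity of $h$ (so $\{u<h(y_{-\infty}^0)<v\}=\{a_{-\infty}^0<y_{-\infty}^0<b_{-\infty}^0\}$) and the inclusion $\{a_{-\infty}^0<y_{-\infty}^0<b_{-\infty}^0\}\subset\{y_0=j\}$. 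Applying \eqref{invariant} (valid for bounded measurable test functions, by the cylinder identity displayed after the definition of invariance together with a monotone-class argument) with $f(z_{-\infty}^0)=\mathbf{1}\{z_0=j\}\,G(z_{-\infty}^{-1})$ gives
\[
\int_{\{y_0=j\}}G(y_{-\infty}^{-1})\,d\proba(y_{-\infty}^0)=\int_{A_{-\infty}^{-1}}p(j\mid\eta_{-\infty}^{-1})\,G(\eta_{-\infty}^{-1})\,d\proba(\eta_{-\infty}^{-1}),
\]
and choosing $G(\eta_{-\infty}^{-1})=\mathbf{1}\{a_{-\infty}^0<\eta_{-\infty}^{-1}j<b_{-\infty}^0\}\big/p(j\mid\eta_{-\infty}^{-1})$ — bounded, because $p(j\mid\cdot)\ge\inf p>0$ by the non-nullness condition~\ref{nonnull} — the conditional density cancels, so the right-hand side of the lemma equals $\proba\big(\{\eta_{-\infty}^{-1}:a_{-\infty}^0<\eta_{-\infty}^{-1}j<b_{-\infty}^0\}\big)=\proba\big(\{\eta_{-\infty}^{-1}:a_{-\infty}^{-1}<\eta_{-\infty}^{-1}<b_{-\infty}^{-1}\}\big)$, again since $a_0=b_0=j$. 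This is exactly the expression obtained for $T(v)-T(u)$, which proves the lemma.

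The one point requiring care is the bookkeeping: translating the action of the shift $\shift$ on one-sided pasts into the lexicographic inequalities above and invoking the stationarity of $\proba$ to pass between the coordinate blocks $\{0,-1,-2,\dots\}$ and $\{-1,-2,\dots\}$. Once this is handled, the cancellation of $p(j\mid\cdot)$ on the right via \eqref{invariant}, and hence the equality of the two sides, is immediate.
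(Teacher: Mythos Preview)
Your proof is correct and follows essentially the same approach as the paper: both arguments change variables via Proposition~\ref{hcontinu}(v) to rewrite the integral as a $\proba$-integral over $A_{-\infty}^{0}$, use that $h^{-1}(u)_0=h^{-1}(v)_0=j$ on a single monotonicity interval, and then apply the invariance relation~\eqref{invariant} to cancel the factor $p(j\mid\cdot)$, reducing to $\proba\big(\shift(J(h^{-1}(v)))\big)-\proba\big(\shift(J(h^{-1}(u)))\big)=T(v)-T(u)$ via~\eqref{Tegal}. Your version unpacks the stationarity behind~\eqref{Tegal} explicitly rather than citing it, but the substance is identical. One small remark: your justification that the past marginal of $\proba$ is non-atomic via ``an atom would force periodicity'' is more elaborate than needed; it suffices to note that $\proba\big(\{y_{-\infty}^{-1}=a_{-\infty}^{-1}\}\big)\le\proba\big(C(a_{-k}^{-1})\big)\to 0$ by the same cylinder-shrinking argument used in the proof of Proposition~\ref{hcontinu}(ii).
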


\begin{proof}
By definition
\[
\int_u^v\frac{d\lambda(\omega)}{p(h^{-1}(\omega)_0\, |\,
  h^{-1}(\omega)_{-\infty}^{-1})}=
\int_{h^{-1}([u,v])}\frac{d\P(z^0_{-\infty})}{p(z_0\, |\,
    z^{-1}_{-\infty})}
\]

\begin{equation}\label{integre2}
= \int \frac{\gun_{J(h^{-1}(v))\backslash J(h^{-1}(u))}(z^0_{-\infty})}
{p(z_0 |\, z^{-1}_{-\infty})} d\P(z^0_{-\infty})\, .
\end{equation}
In the above formula, $\gun_{J(h^{-1}(v))\backslash J(h^{-1}(u))}$
denotes the characteristic function of the set $J(h^{-1}(v))\backslash
J(h^{-1}(u))$. Since $u$ and $v$ by hypothesis belong to the same
monotonicity interval, we have that
\[
h^{-1}(u)_0=h^{-1}(v)_0\, .
\]

Let $f:A_{-\infty}^{0}\rightarrow \R$ be the function 
\[
f(z_{-\infty}^0)=\frac{\gun_{J(h^{-1}(v))\backslash J(h^{-1}(u))}(z^0_{-\infty})}
{p(z_0 |\, z^{-1}_{-\infty})}=
\frac{\gun_{\{z_0=h^{-1}(v)_0\}}\;\gun_{\shift(J(h^{-1}(v))\backslash J(h^{-1}(u)))}(z^{-1}_{-\infty})}
{p(z_0 |\, z^{-1}_{-\infty})}\, .
\]
Using the invariance of $\P$ (see \eqref{invariant}) with the
function $f$, we can rewrite the integral \eqref{integre2} as
\[
\int \frac{\gun_{J(h^{-1}(v))\backslash J(h^{-1}(u))}(z^0_{-\infty})}
{p(z_0 |\, z^{-1}_{-\infty})} d\P(z^0_{-\infty})=
\int\gun_{\shift(J(h^{-1}(v))\backslash
  J(h^{-1}(u)))}(z^{-1}_{-\infty}) d\P(z^{-1}_{-\infty})\, .
\]
Now we observe that 
\[
\shift(J(h^{-1}(u))\subset \shift(J(h^{-1}(v))
\]
and therefore
\[
\int\gun_{\shift(J(h^{-1}(v))\backslash
  J(h^{-1}(u)))}(z^{-1}_{-\infty})d\P(z^{-1}_{-\infty})=
\P\left\{\shift(J(h^{-1}(v))\right\}-\P\left\{\shift(J(h^{-1}(u))\right\}\, .
\]
Now it is enough to use equality \eqref{Tegal} to conclude the proof.
\end{proof}

We can now prove Theorem \ref{existe}.
\begin{proof} 

Assertion 1 of the theorem follows directly from Lemma \ref{Tegal}.

For Assertion 2, we start by observing  that for
  $i=1,\ldots,\cardin$ we   have
\[
\lim_{\omega\nearrow \eta_i}T(\omega)=1
\]
and for $i=0,\ldots,\cardin-1$
\[
\lim_{\omega\searrow \eta_i}T(\omega)=0\;.
\]

The topological Markov property follows from the piecewise
monotonicity of $T$.

The invariance and ergodicity of the Lebesgue measure $\lambda$ follows
from the fact that $T$ and the shift $\shift$ are conjugated by
$h$. 

To prove that $p$ is the regular version of the conditional
probability we start with equality
$$
\lambda\big(I_{x_{-k}^{0}}\big)=\proba\big(C(x_{-k}^{0})\big)
$$
where
$$
I_{x_{-k}^{0}}=\big\{\omega\,\big|\, T^{j}(\omega)\in I_{x_{-j}}\,, j=0\ldots k\big\}
\;.
$$
Therefore, for any $x_{-\infty}^{0}\in A_{-\infty}^{0}$
$$
\lim_{k\to\infty}\frac{\lambda(I_{x_{-k}^{0}})}{\lambda(I_{x_{-k}^{-1}})}
= \lim_{k\to\infty}\frac{\lambda(I_{x_{-k}^{0}})}{\lambda(T(I_{x_{-k}^{0}}))}
=\lim_{k\to\infty}\frac{\proba\big(C(x_{-k}^{0})\big)}
{\proba\big(C(x_{-k}^{-1})\big)}=p(x_{0}\,|\,x_{-\infty}^{-1})\;,
$$
where the last equality follows from the continuity of the family of
transition probabilities $p$. 
 
Assertions 3, 4 and 5 follow directly from Lemma \ref{integrale}, and the
finiteness of the derivative follows from the non-nullness assumption.

To prove Assertion 6, we first observe that
the exponential decay of the continuity rate  $\beta_{k}$ implies that
there
exists two constants $C>0$ and $0<\rho<1$ such that for any $k\ge 1$
\begin{equation}\label{crho}
\beta_{k}\le C\; \rho^{k}\;.
\end{equation} 
Let
$$
\gamma=
\frac{1}{\sup_{x_{-\infty}^{0}\in A_{-\infty}^{0} }p(x_{0}\,|\,x_{-\infty}^{-1})}\;,
$$
and
$$
\Gamma=
\frac{1}{\inf_{x_{-\infty}^{0}\in A_{-\infty}^{0} }p(x_{0}\,|\,x_{-\infty}^{-1})}\;.
$$

From the non-nullness assumption it follows immediately that
$\gamma>1$, and $\Gamma<\infty$. 
For  $\omega$ and $\omega'$ in the same interval of monotonicity 
$I_{j}$, let 
$$
m=\delta\big(h^{-1}(\omega),h^{-1}(\omega')\big)\;,
$$
where $\delta$ was defined in \ref{dist}. 
Let
$$
M=\left[-\frac{\log|\omega-\omega'|}
{\log\gamma}\right]\;,
$$
where $[\;]$ denotes the integer part.

We first consider the case 
$
m >M
$
Then from \eqref{crho} we have
$$
\big|p(h^{-1}(\omega)_{0}|h^{-1}(\omega)_{-\infty}^{-1})
-p(h^{-1}(\omega')_{0}|h^{-1}(\omega')_{-\infty}^{-1})\big|
\le C\;\rho^{\delta\big(h^{-1}(\omega),h^{-1}(\omega')\big)}
$$
$$
\le C\;\rho^{-1}\;\rho^{-\log|\omega-\omega'|/\log\gamma}
=C\;\rho^{-1}\;|\omega-\omega'|^{-\log\rho/\log\gamma}\;.
$$
This implies that
$$
\big|T'(\omega)-T'(\omega')\big|=\left|
\frac{1}{p(h^{-1}(\omega)_{0}|h^{-1}(\omega)_{-\infty}^{-1})}
-\frac{1}{p(h^{-1}(\omega')_{0}|h^{-1}(\omega')_{-\infty}^{-1})}
\right|
$$
$$
\le \Gamma^{2}\,C\;\rho^{-1}\;|\omega-\omega'|^{-\log\rho/\log\gamma}\;.
$$
We now consider the case $m\le M$. 
If
$$
|\omega-\omega'|^{1/2}\,\Gamma^{m}>
\min\big\{\lambda(I_{1}),\lambda(I_{\cardin})\big\}\;, 
$$
we have
$$
m\ge -\frac{1}{2\log\Gamma}\log|\omega-\omega'|
+\frac{\log \min\big\{\lambda(I_{1}),\lambda(I_{\cardin})\big\}}{\log\Gamma}\;.
$$
The same estimate as before implies
$$
\big|T'(\omega)-T'(\omega')\big|\le
\Gamma^{2}\,C\;\rho^{-1}\;
\rho^{\log \min\big\{\lambda(I_{1}),\lambda(I_{\cardin})\big\}/\log\Gamma}
|\omega-\omega'|^{-\log\rho/(2\log\gamma)}\;.
$$
Finally if 
$$
|\omega-\omega'|^{1/2}\,\Gamma^{m}\le 
\min\big\{\lambda(I_{1}),\lambda(I_{\cardin})\big\}\;, 
$$
we have, assuming $\omega'>\omega$, that
$$
h^{-1}(\omega)=h^{-1}(\omega)_{-\infty}^{-m-2-M/2}\cardin_{-m-1-M/2}^{-m-1}h^{-1}(\omega)_{-m}
h^{-1}(\omega)_{-m+1}^{0}
$$
and 
$$
h^{-1}(\omega')=h^{-1}(\omega')_{-\infty}^{-m-2-M/2}1_{-m-1-M/2}^{-m-1}
(h^{-1}(\omega)_{-m}+1)
h^{-1}(\omega)_{-m+1}^{0}\;.
$$
From inequality \eqref{crho} we get
$$
\big|p(h^{-1}(\omega)_{0}\,|\,h^{-1}(\omega)_{-m+1}^{-1}h^{-1}(\omega)_{-m}
\cardin_{-m-1-M/2}^{-m-1}h^{-1}(\omega)_{-\infty}^{-m-2-M/2})
$$
$$
-p(h^{-1}(\omega)_{0}\,|\,h^{-1}(\omega)_{-m+1}^{-1}h^{-1}(\omega)_{-m}
\cardin_{-\infty}^{-m-1}\big|\le C\,\rho^{m+M/2}
$$
and
$$
\big|p(h^{-1}(\omega')_{0}\,|\,h^{-1}(\omega')_{-m+1}^{-1}h^{-1}(\omega')_{-m}
1_{-m-1-M/2}^{-m}h^{-1}(\omega')_{-\infty}^{-m-2-M/2})
$$
$$
-p(h^{-1}(\omega')_{0}|h^{-1}(\omega')_{-m+1}^{-1}h^{-1}(\omega')_{-m}
1_{-\infty}^{-m-1}\big|\le C\,\rho^{m+M/2}\;.
$$
Observing that 
$$
h^{-1}(\omega)_{-m+1}^{0}=h^{-1}(\omega')_{-m+1}^{0}\;,
$$
$$
 h^{-1}(\omega')_{-m}=h^{-1}(\omega)_{-m}+1\;,
$$
and using Assumption \ref{equal}, we obtain
$$
\big|p(h^{-1}(\omega)_{0}\,|\,h^{-1}(\omega)_{-m+1}^{-1}h^{-1}(\omega)_{-m}
\cardin_{-m-1-M/2}^{-m-1}h^{-1}(\omega)_{-\infty}^{-m-2-M/2})
$$
$$
-p(h^{-1}(\omega')_{0}\,|\,h^{-1}(\omega')_{-m+1}^{-1}h^{-1}(\omega')_{-m}
1_{-m-1-M/2}^{-m}h^{-1}(\omega')_{-\infty}^{-m-2-M/2})\big|
$$
$$
\le  2\,C\,\rho^{m+M/2}\;.
$$
The conclusion follows as in the two other cases.

\end{proof}
\section{The case of chains with memory of variable length.}\label{vl}

Stochastic chains with memory of variable length appeared in the
pionering paper by Rissanen (1983) \nocite{rissanen} as a universal
system for data compression. We briefly recall the definition of this class of stochastic chains . 

Given a finite alphabet $A$, we define the basic notion of {\sl context tree}.
\begin{definition}
A set of strings 
\[
\tau \subset \bigcup_{k \ge 1}A_{-k}^{-1}\,  \bigcup\,  A_{-\infty}^{-1}
\]
is a context tree if 
\begin{enumerate}
\item $\bigcup_{w\in \tau}C(w)=A_{-\infty}^{-1};$
\item for any pair $w$ and $w'$ of  elements of $\tau$, if  $w\neq w'$, then $C(w) \cap C(w') =\emptyset$.
\end{enumerate}
\end{definition}
In the above definition $w$ and $w'$ denote two sequences, either finite or infinite, and $C(w)$ is the set of all elements of $A_{-\infty}^{-1}$ having the string $w$ as a suffix, {\sl i. e.} having $w$ as final sequence. In case $w$ is finite, $C(w)$ is a cylinder. In case $w$ is infinite $C(w)$ is the unitary set whose unique element is $w$. The name {\sl context tree} comes from the fact that $\tau$ can be described by the leaves of a rooted tree. The strings belonging to $\tau$ are called {\sl contexts}.

\begin{definition}
A probabilistic context tree is a pair $(\tau, p)$, where $\tau$ is a context tree and
\[
p=\{p(\cdot)\, |\, w) \, |\, w \in \tau\}
\]
is a family indexed by $\tau$ of probability measures on the set $A$.
\end{definition}

Given a probabilistic context tree $(\tau, p)$, we define a family of infinite order transition probabilities $\tilde{p}$ on $A$ as follows. For any sequence $x_{-\infty}^{-1} \in A_{-\infty}^{-1} $, and for any symbol $a \in A$
\begin{equation}\label{tree_model}
\tilde{p}(a\,|\, x_{-\infty}^{-1})=p(a\,|\, w)\,
\end{equation}
where $w$ is the unique element of $\tau$, such that $x_{-\infty}^{-1} \in C(w)$.

\begin{definition}
A stochastic chain of infinite order is said to have a memory of variable length described by a probabilistic context tree $(\tau, p)$ if its family of transition probabilities satisfies conditions \eqref{tree_model}.
\end{definition}

Intuitively speaking  in a chain with memory of variable length, at each time step, to predict the next symbol, it is enough to use the past steps corresponding to the context associated to this past.

The question we address in this section is to characterize the maps associated to transition probabilities defined by a probabilistic context tree.  This is the content of the following theorem. 
\begin{theorem}
Let $T$ be a topological Markov expanding map of the interval with alphabet of monotonicity intervals $A$, and with the Lebesgue measure invariant and ergodic. Assume there is a tree of contexts $\tau$ on the alphabet $A$ such that
$$
\sum_{k=1}^{\infty}\sum_{x_{-k}^{-1}\in\tau\,\cap\, A^{-1}_{-k}} \lambda\big(C(x^{-1}_{-k})\big)=1\;,
$$
and for any $x_{-k}^{-1}\in \tau$, for any $a\in A$ and for any $\omega$ and $\omega'$ satisfying
$$
W(\omega)_0^k=a\,x_{-1}\,\ldots\,x_{-k}\,,\qquad\mathrm{and}
\qquad W(\omega')_0^k=a\,x_{-1}\,\ldots\,x_{-k}\;,
$$
we have
$$
T'(\omega)=T'(\omega')\;.
$$
Then the family of transition probabilites associated to the map $T$ by theorem \ref{maptochain}  is a chain with variable length whose contexts are almost surely finite.

Conversely, given a family of transition probabilities which is a chain of variable length with almost surely finite contexts (for an invariant measure), then the associated map by \eqref{Tegal} (see also Theorem \ref{existe}) is piecewise affine with derivatives satisfying the above property. 
\end{theorem}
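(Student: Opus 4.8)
The plan is to prove the two implications separately, using the dictionary between the map $T$, the chain $p$, and the homeomorphism $h$ established in Theorem \ref{maptochain}, Theorem \ref{existe} and Proposition \ref{hcontinu}. Throughout I use that $h$ conjugates $T$ with the shift $\shift$ and sends $\P$ to Lebesgue measure, so that cylinders $C(w)\subset A_{-\infty}^{-1}$ correspond (up to the countable exceptional set $h(\ambigus)$) to the sets $I_{w}$ defined in the proof of Theorem \ref{existe}, with $\lambda(I_w)=\P(C(w))$, and that by Theorem \ref{maptochain} the transition probabilities are $p(b\,|\,a_{-\infty}^{-1})=1/|T'(\omega)|$ where $\omega$ codes $(b,a_{-1},a_{-2},\dots)$.

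\textbf{First implication.} Assume the context tree $\tau$ satisfies $\sum_{k}\sum_{x_{-k}^{-1}\in\tau\cap A_{-k}^{-1}}\lambda(C(x_{-k}^{-1}))=1$ and the stated constancy of $T'$ along finite contexts. First I observe that the hypothesis $\sum\lambda(C(x_{-k}^{-1}))=1$ says exactly that the union of the cylinders over the \emph{finite} contexts already has full $\P$-measure, so the infinite contexts in $\tau$ form a $\P$-null set; this is the ``almost surely finite contexts'' conclusion, and it lets me work $\P$-a.e. with finite contexts only. Next, fix a symbol $a\in A$ and a past $x_{-\infty}^{-1}$ lying in a finite context $x_{-k}^{-1}\in\tau$. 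For every $n\ge k$ pick $\omega_n$ with $W(\omega_n)_0^n=(a,x_{-1},\dots,x_{-n})$; since $W(\omega_n)_0^k=(a,x_{-1},\dots,x_{-k})$ for all such $n$, the constancy hypothesis gives $T'(\omega_n)=T'(\omega_k)$, hence by Theorem \ref{maptochain} the limit defining $p(a\,|\,x_{-\infty}^{-1})$ is already attained at stage $k$ and equals $1/|T'(\omega_k)|$, a quantity depending only on the context $x_{-k}^{-1}$ (and $a$). Defining $p(a\,|\,w):=1/|T'(\omega_k)|$ for each finite context $w$ thus exhibits $(\tau,p)$ as a probabilistic context tree generating the chain, i.e. $p$ is a variable-length chain with a.s. finite contexts; the only point needing a word is that these numbers sum to one over $a$, which follows since they are the true transition probabilities of the chain from Theorem \ref{existinv}.

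\textbf{Converse.} Now assume $p$ is the transition family of a variable-length chain with probabilistic context tree $(\tau,p)$ whose contexts are $\P$-a.s. finite, and let $T$ be the map built from $p$ via \eqref{Tegal} (Theorem \ref{existe}). By Theorem \ref{existe}(3), $T'(\omega)=1/p(h^{-1}(\omega)_0\,|\,h^{-1}(\omega)_{-\infty}^{-1})$ off $h(\ambigus)$; since $p(b\,|\,x_{-\infty}^{-1})=p(b\,|\,w)$ depends only on the finite context $w\in\tau$ containing $x_{-\infty}^{-1}$, the derivative $T'$ is constant on each set $h(aC(w))\setminus h(\ambigus)$ with $a\in A$, $w\in\tau$ finite — and ``a.s. finite contexts'' means these sets cover $\Omega$ up to a Lebesgue-null set, but in fact, since each such set is a finite union of subintervals $I_{aw}$ determined by $h$ applied to a cylinder, $T$ is piecewise affine on the corresponding partition, with the constancy property $T'(\omega)=T'(\omega')$ whenever $W(\omega)_0^k=W(\omega')_0^k=(a,x_{-1},\dots,x_{-k})$ for a context $x_{-k}^{-1}\in\tau$. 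Finally the summability condition $\sum\lambda(C(x_{-k}^{-1}))=1$ is just the image under $h$ (sending $\P$ to $\lambda$) of the statement that the finite contexts carry full $\P$-mass, which is the ``a.s. finite'' hypothesis; so it too holds.

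\textbf{Main obstacle.} The genuinely delicate point in both directions is the bookkeeping near the exceptional set $\ambigus$ (equivalently $h(\ambigus)$): the correspondence ``cylinder $\leftrightarrow$ subinterval'' is only a bijection outside this countable set, the endpoints $\eta_k$ are the ambiguous images $h(\cardin_{-\infty}^{-1}k)=h(1_{-\infty}^{-1}(k+1))$, and one must check that a context of $\tau$ which is a cylinder does pull back under $h$ to a genuine interval (a single block of monotonicity intervals $I_j$), rather than to something split by points of $h(\ambigus)$. This is handled exactly as in Proposition \ref{hcontinu} and Lemma \ref{integrale}: $h$ maps the cylinder $C(x_{-k}^{-1})$ onto an interval up to countably many points, $\lambda$ has no atoms, and the left/right derivative computation of Theorem \ref{existe}(4) together with \eqref{equal} reconciles the two one-sided values at each $\eta$-type point. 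Beyond this, the argument is essentially a transcription of the already-proved statements through the conjugacy $h$, so I expect no further serious difficulty; the piecewise-affine claim in the converse is immediate from Theorem \ref{existe}(3) once one knows $p(\cdot\,|\,\cdot)$ takes only finitely many values on the relevant partition.
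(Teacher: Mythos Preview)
Your proposal is correct and follows precisely the route the paper takes: the paper's entire proof is the one-line remark that the result follows directly from Theorems \ref{maptochain} and \ref{existe}, and your write-up simply unpacks that remark in both directions. Your discussion of the exceptional set $\ambigus$ and the piecewise-affine structure is a welcome elaboration of what the paper leaves implicit, but there is no difference in strategy.
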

\begin{proof}
The result follows directly from Theorems \ref{maptochain} and \ref{existe}.
\end{proof}

\section*{Acknowledgements}
This work is part of USP project MaCLinC, ``Mathematics, computation, 
language and the brain", USP/COFECUB project
``Stochastic systems with interactions of variable range'' and CNPq project
476501/2009-1. AG is partially supported by a CNPq fellowship (grant
305447/2008-4).  P.C. thanks Numec-USP for its kind hospitality.

\bibliographystyle{abbrv}
\bibliography{biblio-07-06-2012.bib}
\end{document}